\newcommand{\R}{\mathbb{R}}
\begin{document}

\title{The nonlinear Poisson equation via a Newton-imbedding procedure}

\author{Jonathan J. Sarhad}

\date{}

\maketitle

\begin{abstract}

This article considers the semilinear boundary value problem given by the Poisson equation, $-\Delta u=f(u)$ in a bounded domain $\Omega\subset \R^{n}$ with smooth boundary.  For the zero boundary value case, we approximate a solution using the Newton-imbedding procedure.  With the assumptions that $f$, $f'$, and $f''$ are bounded functions on $\R$, with $f'<0$, and $\Omega\subset \R^{3}$, the Newton-imbedding procedure yields a continuous solution.  This study is in response to an independent work which applies the same procedure, but assuming that $f'$ maps the Sobolev space $H^{1}(\Omega)$ to the space of H\"older continuous functions $C^{\alpha}(\bar{\Omega})$, and $f(u)$, $f'(u)$, and $f''(u)$ have uniform bounds.  In the first part of this article, we prove that these assumptions force $f$ to be a constant function.  In the remainder of the article, we prove the existence, uniqueness, and $H^{2}$-regularity in the linear elliptic problem given by each iteration of Newton's method.  We then use the regularity estimate to achieve convergence. 

\end{abstract}

\section*{0 Introduction}

The goal of this article is to find suitable hypotheses on a function $f\in C^{2}(\R)$ related to attaining a solution to the semilinear boundary value problem given by
\[(*)
\left\{ \begin{array}{rcl}
-\Delta u&=&f(u)   \hspace{5mm} \mbox{ in } \Omega\\
u|_{\Gamma}&=& \phi \hspace{5mm} \mbox{ on } \Gamma=\partial \Omega,
\end{array}
\right.  
\]

\noindent using the Newton-imbedding procedure that is applied in \cite{Hsiao}.  Here, $f(u)$ is defined as $f\circ u$. \textsl{In this sense $f$ can be viewed as a map from a space of real-valued functions to another space of real valued functions via composition.} In addition, $H^{k}(\Omega)$ is defined as the $L^{2}$ functions on $\Omega$ having (weak) $i^{th}$ derivatives ($1\leq |i|\leq k$) which are $L^{2}$ functions on $\Omega$. This is the Hilbert space notation substituted for the Sobolev space notation $W^{k,2}(\Omega)$.  The space of real-valued functions on $\Omega$ which are H\"older continuous with exponent $\alpha$ will be denoted $C^{\alpha}(\bar{\Omega})$.  The author of \cite{Hsiao} achieves an $H^{2}$ solution when $\Omega$ is a domain in $\R^{3}$ and $\Gamma$ is smooth, provided the following assumptions on $f$ hold:

\begin{itemize}

\item 1. $f$  is a continuous map from  $H^{2}(\Omega)$ to $L^{2}(\Omega).$
\item 2. $f'$ and $f''$  are continuous maps from  $H^{1}(\Omega)$ to $C^{\alpha}(\bar{\Omega})$, $\alpha\in (0,\frac{1}{2}]$.
\item 3. There exists a constant $M>0$ such that

 \[||f(u)||_{L^{2}(\Omega)}\leq M \mbox{ for all } u\in H^{2}(\Omega), \hspace{3mm} ||f'(u)||_{C^{\alpha}(\bar{\Omega})}\leq M \mbox{ for all } u\in H^{1}(\Omega),\hspace{3mm}  \hspace{3mm} \]
 
 \[\mbox{   and   } ||f''(u)||_{C^{\alpha}(\bar{\Omega})}\leq M \mbox{ for all } u\in H^{1}(\Omega).\]

\item 4. $(-f')$ is positive in the sense that $(-f'(u)v,v)>0$ for all $0\neq v\in H^{2}(\Omega)$

\end{itemize}
  
\noindent An additional condition in \cite{Hsiao} is the choice of a \textsl{uniform width} of time intervals in the procedure that ensures convergence, which exists as a consequence of the above assumptions. However, we prove the following theorems in Sections 2 and 3 of this article:
 
\newtheorem*{thm}{Theorem} \begin{thm} {\bf 2.1} \textsl{If $f:\R\rightarrow{}\R$ is a map from $H^{1}(\Omega)$ to $C^{0}(\bar{\Omega})$ via composition and $\Omega$ is a domain in $\R^{n}$ with $n>2$, then $f$ is a constant function.} 
\end{thm}

\begin{thm}{\bf 3.1} \textsl{Let $h:\R\rightarrow \R$ map $H^{2}(\Omega)\bigcap H_{0}^{1}(\Omega)$ to $L^{p}(\Omega)$ via composition, where $1\leq p\leq \infty$ and $\Omega$ is domain in $R^{n}$.  If there exists a constant $M>0$ such that $||h(u)||_{L^{p}}\leq M$ for all $u\in H^{2}(\Omega)\bigcap H_{0}^{1}(\Omega)$, then $h$ is a bounded function on $\R$, i.e. there exists a constant $C>0$ such that $|h(x)|\leq C$ for all $x\in \R$.}
\end{thm}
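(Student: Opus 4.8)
The plan is to argue by contradiction. Suppose $h$ is \emph{not} bounded on $\R$. Then for every $k \in \mathbb{N}$ there is a real number $c_k$ with $|h(c_k)| > k$, so that $|h(c_k)| \to \infty$. The strategy is to convert each such large value into genuine $L^p$-mass by feeding $h$ a test function that is constant and equal to $c_k$ on a fixed subset of $\Omega$ of positive measure. This forces $\|h(u_k)\|_{L^p}$ to blow up and contradicts the uniform bound $M$.

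To build the test functions, first I would fix, once and for all, a single cutoff $\varphi \in C_c^\infty(\Omega)$ with $0 \le \varphi \le 1$ and $\varphi \equiv 1$ on some open ball $B \subset\subset \Omega$ (such $B$ and $\varphi$ exist because $\Omega$ is open and nonempty). For each $k$, set $u_k = c_k \varphi$. Since $H^2(\Omega)$ and $H_0^1(\Omega)$ are vector spaces containing $C_c^\infty(\Omega)$, each $u_k$ lies in $H^2(\Omega)\cap H_0^1(\Omega)$, so the hypothesis applies and in particular $h\circ u_k \in L^p(\Omega)$ with $\|h(u_k)\|_{L^p} \le M$.

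The crucial point is that $\varphi$ — and hence the flat region $B$ on which $u_k \equiv c_k$ — is chosen \emph{independently of} $k$. On $B$ we have $u_k(x) = c_k$ identically, so $(h\circ u_k)(x) = h(c_k)$ for every $x \in B$. Restricting the $L^p$ norm to $B$ therefore gives, for $1 \le p < \infty$,
\[
\|h(u_k)\|_{L^p(\Omega)} \;\ge\; \|h(u_k)\|_{L^p(B)} \;=\; |h(c_k)|\,|B|^{1/p},
\]
while for $p = \infty$ the essential supremum over $B$ yields $\|h(u_k)\|_{L^\infty(\Omega)} \ge |h(c_k)|$. In either case the right-hand side tends to infinity with $k$ whereas the left-hand side stays $\le M$, a contradiction. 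Hence $h$ must be bounded, and one may take $C = \sup_{x\in\R} |h(x)| < \infty$.

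I expect the only thing that genuinely needs care is not any estimate but the \emph{uniformity} of the construction: the lower-bound constant $|B|^{1/p}$ must not degrade as $k \to \infty$, which is exactly why the single fixed bump $\varphi$, rescaled in amplitude by $c_k$ rather than in support, is used instead of test functions whose flat regions shrink. Everything else — the membership of $c_k\varphi$ in $H^2(\Omega)\cap H_0^1(\Omega)$, and the measurability of $h\circ u_k$, which is supplied directly by the hypothesis that $h$ maps the space into $L^p$ — is routine.
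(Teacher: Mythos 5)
Your argument is correct and is essentially the same as the paper's: the author also assumes unboundedness, picks $x_k$ with $|h(x_k)|>k$, multiplies a fixed smooth cutoff (equal to $1$ on a fixed smaller ball $B_{1/2}$ and supported in a ball compactly contained in $\Omega$) by $x_k$, and derives the same lower bound $\|h(u_k)\|_{L^p}\ge |h(x_k)|\,|B_{1/2}|^{1/p}$ to contradict the uniform bound $M$, treating $p=\infty$ separately. No substantive differences.
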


By Theorem 2.1, the assumption in (2) that $f'$ maps $H^{1}$ to $C^{\alpha}$ forces $f'$ to be a constant function.  Theorem 3.1 shows that the uniform bound on $f(u)$ in assumption (3) forces $f$ to be a bounded function on $\R$.  Thus $f$ is shown to be linear and bounded on $\R$, and is therefore a constant function, reducing the scope of the procedure in \cite{Hsiao} to the family of problems given by $-\Delta u=const.$ \\

In Section 1 of this article, we construct a `$mesa$' function (see Figure 1 in Section 1) whose existence in $H^{1}(\Omega)$ will serve as a counterexample to a non-constant mapping. In Section 2, the mesa function is used to prove Theorem 2.1.  In Section 3, Theorem 3.1 is proven using a sequence of smooth `bump' functions in $H^{2}$.  As a consequence of this, the uniform bounds also imposed in (3) on $f'(u)$ and $f''(u)$ imply that $f'$ and $f''$ are also bounded functions on $\R$. In Section 4, we describe and apply the Newton-imbedding procedure to the case of $(*)$ with a zero boundary condition.  Of primary importance in the procedure is the following linear boundary value problem,

\[(**)
\left\{ \begin{array}{rcl}
-\Delta u + q(x)u&=&g(x)   \hspace{5mm} \mbox{ in } \Omega\\
u|_{\Gamma}&=& 0  \hspace{5mm} \mbox{ on } \Gamma,
\end{array}
\right.
\]

\noindent given by each iteration in the Newton-imbedding procedure. Here, $q(x)$ is a positive scaling of $(-f')$ while $g(x)$ depends on $f$ and $f'$ in a manner that allows $g\in L^{2}$ under our assumptions. The exact hypotheses on $q$ and $g$ will be made precise in Section 4. As in \cite{Hsiao},  the assumption that $q>0$ allows for existence and uniqueness for $(**)$ in $H^{1}$, as well as the regularity lifting of the $H^{1}$ solution to $H^{2}$. For the remainder of the article, it will be understood that (**) is the general boundary value problem stated above, with the conditions that $g\in L^{2}$ and $q>0$.  Under the following assumptions on $f$,

\begin{itemize}

\item I.   $f$  is a continuous map from  $H^{2}(\Omega)$ to $L^{2}(\Omega).$
\item II.  $f'$ and $f''$ are continuous maps from  $H^{1}(\Omega)$ to $L^{n}(\Omega)$
\item III. there exists a constant $M>0$ such that

 \[|f|\leq M, \hspace{3mm} |f'|\leq M,\hspace{3mm}  \mbox{  and  } \hspace{3mm} |f''|\leq M.\]

\item IV.  $(-f')>0$,

\end{itemize}

\noindent we prove existence and uniqueness for (**) in Section 5, and achieve the regularity lifting of an $H_{0}^{1}$ solution of (**) to $H^{2}$ in Section 6.  These results are summarized in the following theorem: 

\begin{thm}{\bf 6.1}  \textsl{Let $\Omega$ be a bounded domain in $\R^{n}$ with smooth boundary $\Gamma$ and $n>2$. Then for $g\in L^{2}(\Omega)$, $q\in L^{n}(\Omega)$, and $q>0$, the linear boundary value problem}

\[(**)
\left\{ \begin{array}{rcl}
-\Delta u + q(x)u&=&g(x)   \hspace{5mm} \mbox{ in } \Omega\\
u|_{\Gamma}&=& 0  \hspace{5mm} \mbox{ on } \Gamma,
\end{array}
\right.
\]

\noindent \textsl{has a unique solution $u\in H^{2}(\Omega)\bigcap H_{0}^{1}$ with} 

\[||u||_{H^{2}(\Omega)} \leq C(||g||_{L^{2}(\Omega)}),\]

\noindent \textsl{where $C$ depends only on $\Omega$, $n$, and $q$.}

\end{thm}

In Section 7, under an additional assumption (V) concerning the uniform width of time intervals in the procedure, convergence in the procedure is achieved resulting in the following theorem:  

\begin{thm}{\bf 7.1} \textsl{With $\Omega$ a bounded domain in $\R^{3}$ with smooth boundary and assumptions (I)-(V), the semilinear boundary value problem},

\[(*')
\left\{ \begin{array}{rcl}
-\Delta u&=&f(u)   \hspace{5mm} \mbox{ in } \Omega\\
u|_{\Gamma}&=& 0 \hspace{5mm} \mbox{ on } \Gamma=\partial \Omega,
\end{array}
\right.  
\]

\noindent has a unique solution in $H^{2}(\Omega)\bigcap H_{0}^{1}(\Omega)$, and hence a continuous solution, which can be approximated by the Newton-imbedding procedure.\\
\end{thm}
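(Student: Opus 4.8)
The plan is to view the Newton-imbedding procedure as a continuation in a parameter $t\in[0,1]$ for the family
\[
-\Delta u = t\,f(u)\ \mbox{ in }\Omega,\qquad u|_\Gamma=0,
\]
whose $t=0$ member has the trivial solution $u\equiv 0$ and whose $t=1$ member is exactly $(*')$. The engine throughout is Theorem 6.1. Writing $F(u,t)=-\Delta u-t\,f(u)$, the Fréchet derivative in $u$ is $F_u(u,t)v=-\Delta v-t\,f'(u)v$, which is the operator of $(**)$ with $q=-t\,f'(u)$. By (IV) this $q$ is a positive scaling of $(-f')$, hence $q>0$ for $t>0$, and by (II),(III) it lies in $L^n$ with a uniform bound; Theorem 6.1 then makes $F_u(u,t)$ boundedly invertible from $L^2$ onto $H^2\cap H^1_0$, with $\|F_u(u,t)^{-1}g\|_{H^2}\le C\|g\|_{L^2}$.

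First I would establish a uniform a priori bound along the solution path. Differentiating $F(u(t),t)=0$ in $t$ yields the Davidenko equation $-\Delta\dot u+(-t\,f'(u))\dot u=f(u)$, i.e. $\dot u$ solves $(**)$ with right-hand side $g=f(u)$; since $|f|\le M$ by (III), $\|g\|_{L^2}\le M|\Omega|^{1/2}$, so Theorem 6.1 gives $\|\dot u(t)\|_{H^2}\le C M|\Omega|^{1/2}$ with $C$ independent of $t$. Integrating from $u(0)=0$ gives $\|u(t)\|_{H^2}\le C M|\Omega|^{1/2}$ for all $t\in[0,1]$. Local solvability of the path-ODE in the Banach space $X=H^2\cap H^1_0$ follows from a Picard--Lindel\"of argument once I verify that $u\mapsto F_u(u,t)^{-1}f(u)$ is locally Lipschitz on $X$; here I use the continuity of $f,f',f''$ as maps out of $H^2$ and $H^1$ (assumptions (I),(II)) together with the embedding $H^2(\Omega)\hookrightarrow L^\infty(\Omega)$, valid since $n=3$, to control perturbations of $f'(u)$ and hence of $F_u(u,t)^{-1}$ by $\|u-\tilde u\|_{H^2}$ using $|f''|\le M$. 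The uniform bound is precisely what allows the local path to be continued all the way to $t=1$.

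The procedure itself partitions $[0,1]$ into intervals of the uniform width supplied by assumption (V); at each node it takes a predictor step (one solve of $(**)$ for $\dot u_k$) and then Newton corrector steps, each again a solve of $(**)$. The crux of the convergence proof is that these steps admit Newton--Kantorovich estimates that are uniform across the partition. This rests on two facts: the operator bound $\|F_u^{-1}\|$ is uniform in $t$ and $u$ (from (III) via Theorem 6.1), and the nonlinear remainder is quadratically small, since Taylor's theorem with $|f''|\le M$ and the $\R^3$ embedding $H^2\hookrightarrow L^\infty$ give $\|f(u+v)-f(u)-f'(u)v\|_{L^2}\le \frac{M}{2}\|v\|_{L^\infty}\|v\|_{L^2}\le C\|v\|_{H^2}^2$. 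Choosing the step width below the resulting node-independent convergence radius, which is the content of (V), keeps every iterate inside the a priori ball of the previous paragraph and drives the residuals to zero, so the discrete procedure converges in $H^2$ to the path and, at $t=1$, to a limit $u\in H^2\cap H^1_0$ solving $(*')$. I expect this uniformity to be the main obstacle: one must extract a $t$-independent constant in Theorem 6.1 from the uniform data bounds (III) on $f,f',f''$ (so that the dependence of $C$ on $q$ reduces to a dependence on $\|q\|_{L^n}\le M|\Omega|^{1/n}$), and control the quadratic remainder through the $n=3$ embedding.

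Finally, continuity and uniqueness are comparatively routine. Because $n=3$, the solution lies in $H^2(\Omega)\hookrightarrow C^0(\bar\Omega)$, so it is continuous. For uniqueness, if $u_1,u_2$ both solve $(*')$ and $w=u_1-u_2$, then $-\Delta w=f(u_1)-f(u_2)$; testing with $w$ and applying the mean value theorem gives $\int_\Omega|\nabla w|^2=\int_\Omega f'(\xi)\,w^2\le 0$ by (IV), whence $w\equiv 0$ by Poincar\'e's inequality.
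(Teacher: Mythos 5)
Your proposal is correct in substance, and its discrete core --- uniform invertibility of the linearized operator via Theorem 6.1 (with the constant made independent of $m$ and $t_{j}$ through the uniform bounds (III)), a quadratically small Taylor remainder controlled by $|f''|\leq M$, and the step-width condition (V) to stay inside a node-independent convergence radius --- is exactly the engine of the paper's proof. You package it differently in three respects. First, you precede the discrete argument with a Davidenko-type continuation, differentiating $F(u(t),t)=0$ to obtain an a priori $H^{2}$ bound along the path and invoking Picard--Lindel\"of for local solvability; the paper skips this layer entirely, since (III) already gives $\|q\|_{L^{n}}=\|t_{j}f'(u_{m})\|_{L^{n}}\leq M|\Omega|^{1/n}$ uniformly, so no bound on the solution path is needed, and it instead runs the explicit recursion $\|u_{m+1}-u_{m}\|_{H^{2}}\leq a^{2^{m}-1}\|u_{1}-u_{0}\|_{H^{2}}$ with $a=t_{j}K\|u_{1}-u_{0}\|_{H^{2}}<1$ and sums the resulting geometric tail to get a Cauchy sequence in $H^{2}$. (Relatedly, the procedure as defined uses the previous solution directly as initial guess, with no predictor solve for $\dot u$; the zeroth-order continuation already yields $\|u_{1}-u_{0}\|_{H^{2}}\leq MC(t_{j}-t_{j-1})$, which is all that (V) requires.) Second, your quadratic estimate goes through $H^{2}\hookrightarrow L^{\infty}$, valid for $n=3$, whereas the paper bounds $\|(u_{m}-u_{m-1})^{2}\|_{L^{2}}=\|u_{m}-u_{m-1}\|_{L^{4}}^{2}$ and controls $L^{4}$ by $H^{2}$, which works for $3\leq n\leq 8$; only the continuity of the limit then restricts the theorem to $n=3$. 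Third, and to your credit, your uniqueness argument --- testing $-\Delta w=f(u_{1})-f(u_{2})$ with $w=u_{1}-u_{2}$ and using $f'<0$ together with Poincar\'e --- is a genuine proof of uniqueness for the nonlinear problem; the paper only asserts that uniqueness ``follows from the uniqueness of the solution to (**) for each $m$ and $t_{j}$,'' which gives uniqueness of the iterates but not of solutions to $(*')$, so your energy argument actually supplies a step the paper leaves unjustified.
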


\section{The Mesa Function}

Let $\Omega$ be a domain in $\R^{n}$ with $n>2$ and let $c\in \Omega$.  Since the function will be radially symmetric about $c$, define $r=\mid x-c \mid$ for $x\in \Omega$, and $T>0$ such that $B(c,T)\subset\subset \Omega$, where $B(c,T)$ denotes the open ball of radius $T$ about $c$. Also let $a,b\in \R$ with $a<b$, and $\alpha \in (0,\frac{n-2}{2})$. In order to define the function, it is necessary to decompose the interval $[0,T]$ as follows:\\ 

\vspace{5mm}

If we let $\displaystyle{r_{1}^{+}=\frac{T}{2}}$, then there is an $s_{1}^{+}$ such that

\[\frac{1}{(s_{1}^{+})^{\alpha}}-\frac{1}{(r_{1}^{+})^{\alpha}}= b-a.\] 

In particular, $0<s_{1}^{+}<r_{1}^{+}$.  Setting $\displaystyle{s_{1}^{-}=\frac{s_{1}^{+}}{2}}$ allows for an $r_{1}^{-}$ such that

\[\frac{1}{(r_{1}^{-})^{\alpha}}-\frac{1}{(s_{1}^{-})^{\alpha}}= b-a.\]

In particular, $0<r_{1}^{-}<s_{1}^{-}$.  Continuing in this manner, set $\displaystyle{r_{m+1}^{+}=\frac{r_{m}^{-}}{2}}$.\\

 Note that $r_{m+1}^{+}>0$ for all m and $r_{m+1}^{+}$ goes to zero with $\displaystyle{\frac{1}{2^{m}}}$.\\

\vspace{10mm}

Using the above notation, let $U:\Omega\rightarrow{}$R be the radially symmetric piecewise function defined inductively by\\

\vspace{5mm}

$\displaystyle{U(r)=\left\{\begin{array}{lcl}

\vspace{3mm}

0 & , & r\geq T\\

\vspace{3mm}

(\frac{-2a}{T})r+ 2a & , & r_{1}^{+}\leq r\leq T\\

\vspace{3mm}

\frac{1}{r^{\alpha}}-\frac{1}{(r_{m}^{+})^{\alpha}}+a & , & s_{m}^{+}\leq r\leq r_{m}^{+}\\

\vspace{3mm}

b & , & s_{m}^{-}\leq r\leq s_{m}^{+}\\

\vspace{3mm}

b-(\frac{1}{r^{\alpha}}-\frac{1}{(s_{m}^{-})^{\alpha}}) & , & r_{m}^{-}\leq r\leq s_{m}^{-}\\

\vspace{3mm}

a & , & r_{m+1}^{+}\leq r \leq r_{m}^{-}.

\end{array}
\right.}$\\

We will call $U(r)$ a $mesa$ function with exponent $\alpha$.  Figure 1, below, is a sketch of a mesa function whose partition points have been altered to show more `mesas'.  \\  

\begin{figure}[h]
\includegraphics[scale=0.6]{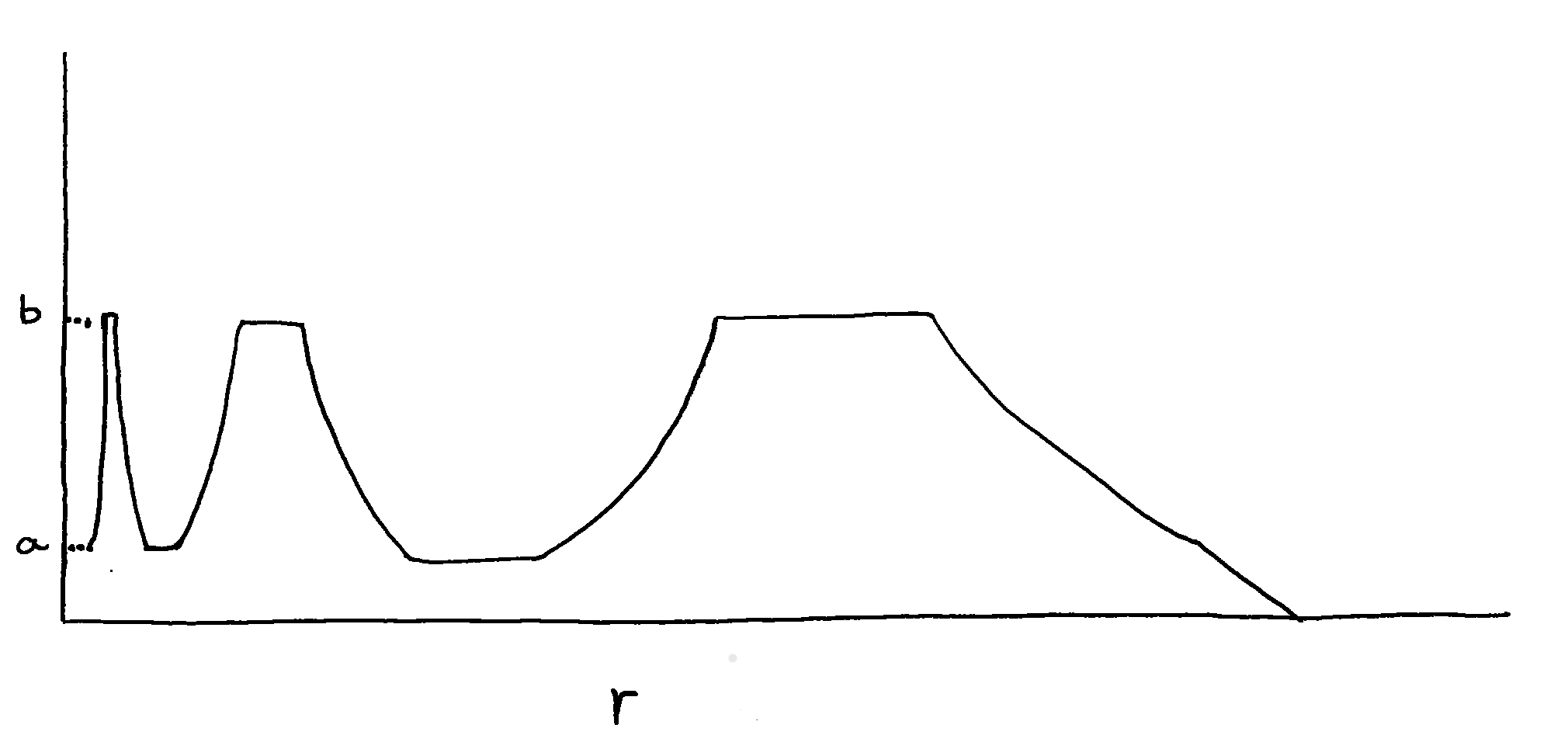}
\caption{Artist's depiction of a mesa function}
\end{figure}

$U$ is bounded and has compact support, so is trivially in $L^{2}(\Omega)$.  It remains to show that it has (weak) first derivatives in $L^{2}(\Omega)$. The proposed first derivatives are given by\\

$\displaystyle{U_{x_{i}}(r)=\left\{\begin{array}{lcl}

\vspace{3mm}

0 & , & r\geq T\\

\vspace{3mm}

\frac{-2a}{T} & , & r_{1}^{+}\leq r\leq T\\

\vspace{3mm}

\frac{-\alpha x_{i}}{r^{\alpha + 2}} & , & s_{m}^{+}\leq r\leq s_{m}^{+}\\

\vspace{3mm}

0 & , & s_{m}^{-}< r\leq s_{m}^{+}\\

\vspace{3mm}

\frac{\alpha x_{i}}{r^{\alpha + 2}} & , & r_{m}^{-}< r\leq s_{m}^{-}\\

\vspace{3mm}

0 & , & r_{m+1}^{+}< r\leq r_{m}^{-}.

\end{array}
\right.}$\\

\vspace{5mm}

Away from zero, on each annulus of the decomposed $\Omega$, the expressions in $U_{x_{i}}$ are classical derivatives of their corresponding expressions in $U(r)$. Let $\phi\in C_{0}^{\infty}(\Omega)$ and fix $N$.  Integrating $U\phi_{x_{i}}$ by parts over the annuli given by $[r_{1}^{+}, T]$, $[s_{m}^{+},r_{m}^{+}]$, $[s_{m}^{-}, s_{m}^{+}]$, $[r_{m}^{-}, s_{m}^{-}]$, and $[r_{m+1}^{+}, r_{m}^{-}]$ for $m=1,...,N$ and recalling that $U\equiv 0$ for $r\geq T$, gives\\

\vspace{5mm}

$\displaystyle{\int_{\Omega-B(c,r_{N+1}^{+})}{U\phi_{x_{i}}dx}}= -\int_{\Omega-B(c,r_{N+1}^{+})}{U_{x_{i}}\phi dx} + \int_{\partial B(c,r_{N+1}^{+})}{U\phi \rho^{i} dS}$,

\vspace{5mm}  

\noindent where $\rho=(\rho^{1},...,\rho^{n})$ is the inward pointing normal on $\partial B(c,r_{N+1}^{+})$.

\vspace{5mm}

Let $\displaystyle{u(r)=\frac{1}{r^{\alpha}}}$.  Note that $|U_{x_{i}}|\leq  |u_{x_{i}}|$, so that $|DU|\leq |Du|$.\\

\noindent Following the line of argument [1, p.246] given by L. Evans, since $\alpha< n-1$, $\displaystyle{|Du| = \frac{\alpha}{r^{\alpha +1}}\in L^{1}(\Omega)}$ and therefore $|DU|\in L^{1}(\Omega)$.\\

Letting $N\rightarrow \infty$ (and thus $r_{N+1}^{+}\rightarrow 0$),

\vspace{5mm} 

$\displaystyle{\left| \int_{\partial B(c,r_{N+1}^{+})}{U\phi \rho^{i} dS}\right|\hspace{1mm} \leq \hspace{1mm} \parallel U\phi \parallel_{\infty} \int_{\partial B(c,r_{N+1}^{+})}{\rho^{i} dS}}\hspace{1mm} \leq \hspace{1mm} M(r_{N+1}^{+})^{n-1}\rightarrow 0$,

\vspace{5mm}

hence \hspace{4mm} $\displaystyle{\int_{\Omega}{U\phi_{x_{i}}} dx = -\int_{\Omega}{U_{x_{i}}\phi dx}}.$

\vspace{5mm}

Therefore $U_{x_{i}}$ is a (weak) derivative of $U$.  Moreover, since $\displaystyle{\alpha< \frac{n-2}{2}}$,

\vspace{1mm}

\noindent following the arguement in [1, p.246], $\mid Du \mid\in L^{2}(\Omega)$ and thus $|DU|\in L^{2}(\Omega)$ and $U(r)\in H^{1}(\Omega)$. The following lemma summarizes the above discussion:\\
 
\newtheorem{lem}{Lemma 1.1} Lemma 1.1 \textsl{If $\Omega$ is a domain in $\R^{n}$ with $n>2$, and $U(r)$ is a mesa function with exponent $\displaystyle{\alpha< \frac{n-2}{2}}$, then $U(r)\in H^{1}(\Omega)$.}

\vspace{5mm}

\section{Constant Mapping}

\begin{thm} {\bf 2.1} \textsl{If $f:\R\rightarrow{}\R$ is a map from $H^{1}(\Omega)$ to $C^{0}(\bar{\Omega})$ via composition and $\Omega$ is a domain in $\R^{n}$ with $n>2$, then $f$ is a constant function.} 
\end{thm}

\begin{proof}

Suppose on the contrary, that $f$ is not constant and assumes distinct values at $a$ and $b$.  Without loss of generality, assume that $a< b$. Let $c\in\Omega$ and $T$ be such that $B(c,T)\subset\subset \Omega$.  Since $n>2$, there exists $\alpha$ such that $0<\alpha<\frac{n-2}{2}$.  Let $U(r)$ be the mesa function centered at c, with exponent $\alpha$, support in $B(c,T)$, and prescribed maximum and minimum, $b$ and $a$, respectively.  By the above lemma, $U(r)$ is in $H^{1}(\Omega)$.  Using the notation in the previous section for the domain of $U(r)$, it holds that for any $\delta > 0$ there exists an $N$ such that $[s_{N}^{-},s_{N}^{+}]\subset B(c,\delta)$ and $[r_{N+1}^{+},r_{N}^{-}]\subset B(c,\delta)$.  Note that $f\circ U \equiv f(b)$ on $[s_{N}^{-},s_{N}^{+}]$ and $f\circ U \equiv f(a)$ on $[r_{N+1}^{+},r_{N}^{-}]$.  Since the measure of the above intervals is strictly positive, $f\circ U$ has no continuous representative.  In other words, the oscillations of $f\circ U$ do not diminish in any neighborhood of $c$. This contradicts the hypothesis that $f$ maps $U$ to a continuous function.

\end{proof}

Now, as an immediate application of Theorem 2.1, the assumption in (2) that $f^{'}$ maps $H^{1}$ into continuous functions forces $f^{'}$ to be constant.\\

\section{Uniform Bounds}

For this Section we assume $\Omega$ is a domain in $\R^{n}$.\\

\begin{thm}{\bf 3.1} \textsl{Let $f:\R\rightarrow \R$ map $H^{2}(\Omega)\bigcap H_{0}^{1}(\Omega)$ to $L^{p}(\Omega)$ where $1\leq p\leq \infty$.  If there exists a constant $M>0$ such that $||f(u)||_{L^{p}}\leq M$ for all $u\in H^{2}(\Omega)\bigcap H_{0}^{1}(\Omega)$, then $f$ is a bounded function on $\R$, i.e. there exists a constant $C>0$ such that $|f(x)|\leq C$ for all $x\in \R$.}
\end{thm}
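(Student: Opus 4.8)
The plan is to argue by contradiction, exploiting the fact that $f$ acts by pointwise composition, so that forcing $u$ to be constant on a set of positive measure pins down the value of $f\circ u$ there. Suppose $f$ is unbounded on $\R$; then there is a sequence $(x_{k})\subset\R$ with $|f(x_{k})|\to\infty$. The strategy is to manufacture, for each $k$, an admissible test function $u_{k}\in H^{2}(\Omega)\cap H_{0}^{1}(\Omega)$ that is identically equal to $x_{k}$ on one fixed set of positive measure, and then to show $\|f(u_{k})\|_{L^{p}}\to\infty$, contradicting the uniform bound $M$.

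First I would fix the geometry once and for all. Choose $c\in\Omega$ and radii $0<\rho<T$ with $B(c,T)\subset\subset\Omega$, and a single smooth plateau cutoff $\psi\in C_{0}^{\infty}(\Omega)$ satisfying $0\le\psi\le 1$, $\psi\equiv 1$ on $B(c,\rho)$, and with support contained in $B(c,T)$ (the standard mollified indicator of $B(c,(\rho+T)/2)$). Then set $u_{k}=x_{k}\,\psi$. Since $\psi$ is smooth with compact support in $\Omega$, each $u_{k}\in C_{0}^{\infty}(\Omega)\subset H^{2}(\Omega)\cap H_{0}^{1}(\Omega)$, so every $u_{k}$ is a legitimate argument for the hypothesized bound.

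Next I would read off the lower bound. On $B(c,\rho)$ we have $u_{k}\equiv x_{k}$, hence $f\circ u_{k}\equiv f(x_{k})$ there. For $1\le p<\infty$ this gives
\[
\|f(u_{k})\|_{L^{p}(\Omega)}\ \ge\ \left(\int_{B(c,\rho)}|f(x_{k})|^{p}\,dx\right)^{1/p}\ =\ |f(x_{k})|\,|B(c,\rho)|^{1/p},
\]
while for $p=\infty$ one has $\|f(u_{k})\|_{L^{\infty}(\Omega)}\ge|f(x_{k})|$. Because the plateau measure $|B(c,\rho)|$ is a fixed positive constant independent of $k$ and $|f(x_{k})|\to\infty$, the right-hand side exceeds $M$ for all large $k$, contradicting $\|f(u_{k})\|_{L^{p}}\le M$. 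Hence $f$ must be bounded on $\R$.

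I do not expect a serious obstacle here; the only point that genuinely needs care is that the set on which $u_{k}$ is \emph{exactly} constant must have positive measure bounded below uniformly in $k$ (here simply $|B(c,\rho)|$), so that sending $|f(x_{k})|\to\infty$ really forces the $L^{p}$ norm to blow up. Scaling the one fixed cutoff $\psi$ by the height $x_{k}$, rather than adjusting the cutoff itself, is exactly what keeps this plateau measure constant while also keeping each $u_{k}$ inside $H^{2}(\Omega)\cap H_{0}^{1}(\Omega)$, so a single bump profile suffices for the whole sequence.
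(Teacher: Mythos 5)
Your proposal is correct and follows essentially the same argument as the paper: both proceed by contradiction, scale a single fixed smooth plateau cutoff by the heights $x_{k}$ of an unbounded sequence of values of $f$, and derive the lower bound $|f(x_{k})|\,|B|^{1/p}$ on the $L^{p}$ norm from the fixed plateau of positive measure, handling $p=\infty$ separately. No substantive differences.
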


\begin{proof}  Let $p<\infty$.  Suppose on the contrary, that $f$ is not bounded.  Then there exists a sequence, $\{x_{k}\}_{k=1}^{\infty}$ in $\R$ such that $|f(x_{k})|>k$.  Let $y_{0}\in \Omega$ and $r$ such that $B=B(y_{0},r)\subset\subset \Omega$.  Set $B_{\frac{1}{2}}=B(y_{0},\frac{r}{2})$. Choose a smooth function, $\gamma$, such that $\gamma\equiv 1$ on $B_{\frac{1}{2}}$, $\gamma\equiv 0$ on $\Omega-B$, and $0\leq \gamma\leq 1$.  Define the smooth function $u_{k}$ on $\Omega$ by $u_{k}=x_{k}\gamma$.  Then $u_{k}\in H^{2}(\Omega)\bigcap H_{0}^{1}(\Omega)$ for all $k$ and 

\[||f(u_{k})||_{L^{p}(\Omega)}\geq ||f(u_{k})||_{L^{p}(B_{\frac{1}{2}})}=||f(x_{k})||_{L^{p}(B_{\frac{1}{2}})}>k|B_{\frac{1}{2}}|^{\frac{1}{p}}. \]

\noindent Choosing $k_{0}$ large enough such that $k_{0}|B_{\frac{1}{2}}|^{\frac{1}{p}}>M$ gives a contradiction.  If $p=\infty$, a similar computation holds, choosing $k_{0}>M$.

\end{proof}

Remark:  Since the $C^{\alpha}$ norm has the $L^{\infty}$ norm as a summand, Theorem 5.1 with $p=\infty$ suffices to show that a uniform bound on $||f(u)||_{C^{\alpha}}$ implies $f$ is bounded.  Therefore the assumptions made in \cite{Hsiao}, imply that $f$, $f'$, and $f''$ are bounded functions.  Moreover, under the same assumptions, as shown in the previous Section, $f$ is linear.  In this case $f$ is a constant, reducing the scope of the procedure to problems given by $-\Delta u=const.$

\section{Newton-imbedding Procedure}

The Newton-imbedding procedure we wish to apply to

\[(*')
\left\{ \begin{array}{rcl}
-\Delta u&=&f(u)   \hspace{5mm} \mbox{ in } \Omega\\
u|_{\Gamma}&=& 0 \hspace{5mm} \mbox{ on } \Gamma=\partial \Omega,
\end{array}
\right.  
\]

\noindent has two parts. It is well described in \cite{Hsiao}, but recalled here for clarity.  The  procedure first imbeds the problem in a one-parameter family of problems,

\[-\Delta u = tf(u)  \hspace{6mm} \mbox{  in  } \Omega\]

\noindent with $u=0$ on $\Gamma$ and parameter $t\in[0,1]$.  We set 

\[F_{t}(u)= \Delta u + tf(u).\] 

\noindent Solving $(*')$ is then a matter of solving $F_{1}(u)=0$. Let $u(x,t)$ be the solution to $F_{t}(u)=0$. Starting with $t_{0}=0$, the problem is solved with solution $u(x,0)$ in $\Omega$.  Observe that with boundary value zero imposed, $u(x,0)$ is uniquely determined as $u(x,0)\equiv 0$.  To solve $F_{t_{1}}(u)=0$, $u(x,0)$ is taken as an initial approximation and the standard Newton's method is applied.  With convergence, the solution $u(x,t_{1})$ to $F_{t_{1}}(u)=0$ is achieved.  The function $u(x,t_{1})$ is then used as an initial approximation for $F_{t_{2}}(u)=0$ and so on for increasing times $t_{j}$.  Thus the solutions are pushed along with increasing times using Newton's method with the goal of reaching $t=1$ in finitely many time shifts.  Let $u_{0}(x,t_{j})=u(x,t_{j-1})$, the initial approximation for $F_{t_{j}}(u)=0$ and $u_{m}(x,t_{j})$ be the $m^{th}$ iteration of Newton's method at time $t_{j}$.  In the following discussion, the argument of the $u_{m}$'s will be suppressed.  We will also temporarily use the symbol $D$ for the Frechet derivative in contrast to its usual use as the gradient.  Note that

\[DF_{t_{j}}(u_{m})[w]=\Delta w + t_{j}Df(u_{m})[w] \hspace{3mm} \mbox{ and } \hspace{3mm} Df(u_{m})[w]= f'(u_{m})w \]

\noindent for $w\in H^{2}(\Omega)$ and that the $(m+1)^{th}$ iterate in the Newton approximation is given by 

\[DF_{t_{j}}(u_{m})[u_{m+1}-u_{m}]= -F_{t_{j}}(u_{m}).\]

\noindent  In this case, the $(m+1)^{th}$ iteration at time $t_{j}$ yields the following linear problem:

\[(**)
\left\{ \begin{array}{rcl}
-\Delta u_{m+1} + (-t_{j}f'(u_{m}))(u_{m+1})&=&t_{j}(f(u_{m})-f'(u_{m})u_{m})   \hspace{5mm} \mbox{ in } \Omega\\
 u_{m+1}|_{\Gamma}&=& 0  \hspace{32mm} \mbox{ on } \Gamma.
\end{array}
\right.
\]

\noindent This is the problem

\[(**)
\left\{ \begin{array}{rcl}
-\Delta u + q(x)u&=&g(x)   \hspace{5mm} \mbox{ in } \Omega\\
u|_{\Gamma}&=& 0 \hspace{5mm} \mbox{ on } \Gamma,
\end{array}
\right.
\]

\noindent stated in the introduction with

\[q= -t_{j}f^{'}(u_{m}),\quad g=t_{j}[f(u_{m}) + f^{'}(u_{m})u_{m}], \quad \mbox{ and } \quad v=u_{m+1}.\]

Initially, a weak solution in $H_{0}^{1}$ is desired, so it makes sense that $u$ be in $H_{0}^{1}$ and that $f$ and $f'$ should be defined on $H_{0}^{1}$.  However, as will be shown in Section 6, an $H_{0}^{1}$ solution to (**) is also in $H^{2}$. In light of this, $f$ and $f'$ need only be defined on $H^{2}$.  Note that if $f$ maps $H^{2}$ to $L^{2}$ and $f'$ maps $H^{2}$ to $L^{n}$, then $g$ is in $L^{2}$ for all dimensions $n>2$, via the Sobolev imbedding theorem.  Indeed, since $u$ is in $H^{1}$, $u$ is again in $L^{\frac{2n}{n-2}}$ and the H\"older inequality gives

\[\int_{\Omega}{[f'(u)u]^{2}} \leq C||f'(u)||_{L^{n}}^{2}||u||_{L^{\frac{2n}{n-2}}}^{2}. \]

\noindent To fullfill the positivity condition on $q$ in (**), we impose that $-f'>0$.  Now, at each time $t_{j}>0$ and for all $m$, the $m^{th}$ step in the iteration at time $t_{j}$ is a model for (**). \\ 

For the remainder of the article, we assume $\Omega$ is a bounded domain in $\R^{n>2}$ with smooth boundary $\Gamma$ and make the following assumptions (I)-(IV) on the nonlinear function $f$:  

\begin{itemize}

\item I.   $f$  is a continuous map from  $H^{2}(\Omega)$ to $L^{2}(\Omega).$
\item II.  $f'$ and $f''$ are continuous maps from  $H^{1}(\Omega)$ to $L^{n}(\Omega)$
\item III. there exists a constant $M>0$ such that

 \[|f|\leq M, \hspace{3mm} |f'|\leq M,\hspace{3mm}  \mbox{  and  } \hspace{3mm} |f''|\leq M.\]

\item IV.  $(-f')>0$.

\end{itemize}

Remark:  There is a redundancy and lack of `sharpness' in assumptions (I) and (II), given (III). Indeed, if the functions $f$, $f'$, and $f''$ are bounded, they naturally map to bounded functions on $\Omega$, and hence to $L^{\infty}(\Omega)$ which is contained in $L^{p}(\Omega)$ for all $p\geq 1$ since $\Omega$ is bounded. The reason for stating $L^{2}$ explicitly is that it is a \textsl{familiar} assumption for framing weak solutions to linear elliptic problems.  The bounds on the functions are not necessary to existence and uniqueness in (**), nor to the regularity lifting of the $H_{0}^{1}$ solution to $H^{2}$ .  Moreover, the $L^{2}$ hypothesis on $f$ and the $L^{n}$ hypothesis on $f'$ are sufficient for existence and uniqueness and the regularity lifting.  For a more general treatment of elliptic equations with measurable coefficients, see \cite{Trud}.

\section{Existence and Uniqueness}

For this Section, we assume (I), (II), and (IV).  To prove existance and uniqueness for (**) in $H_{0}^{1}(\Omega)$ ($H^{1}$ functions with zero on the boundary), the Riesz Representation theorem is sufficient.  We seek a unique solution in $H_{0}^{1}(\Omega)$.  The associated energy form for (**) is 

\[B(u,v)=\int_{\Omega}{DuDv + quv}.\]

\noindent It is well defined on $H_{0}^{1}(\Omega)$.  Indeed, since $n>2$ and $u,v\in H_{0}^{1}(\Omega)$, then $u,v\in L^{\frac{2n}{n-2}}(\Omega)$ by the Sobolev imbedding theorem.
Also since $\Omega$ is bounded, if $q\in L^{n}(\Omega)$, then $q\in L^{\frac{n}{2}}(\Omega)$.  Note that

\[\frac{2}{n} + \frac{n-2}{2n} + \frac{n-2}{2n}=1. \]

\noindent Therefore by H\"older's inequality, $quv$ is integrable over $\Omega$ with

\[\int_{\Omega}{|quv|} \leq ||q||_{L^{\frac{n}{2}}}||u||_{L^{\frac{2n}{n-2}}}||v||_{L^{\frac{2n}{n-2}}}.\]

\noindent This inequality combined with the Sobolev inequality

\[\||u||_{L^{\frac{2n}{n-2}}} \leq C||u||_{H_{0}^{1}}\]

\noindent gives

\[|B(u,v)|\leq C||u||_{H_{0}^{1}}||v||_{H_{0}^{1}} \]

\noindent where $C>0$ is dependent on $\Omega$, $n$, and $||q||_{L^{\frac{n}{2}}}$ but not on $u$ and $v$.  By the Poincar\'e inequality and the positivity of $q$, we have

\[||u||_{H_{0}^{1}}^{2} \leq C\int_{\Omega}{|Du|^{2}} \leq C\int_{\Omega}{|Du|^{2} + qu^{2}}=CB(u,u) \]

\noindent where $C>0$ is dependent on $n$ and $\Omega$ but not on $u$.  Since $f\in L^{2}(\Omega)$, it is a bounded linear functional on $H_{0}^{1}(\Omega)$ \cite{Evans}.  Since $B(u,v)$ is an inner product on $H_{0}^{1}$, the Riesz Representation theorem provides a unique $u^{*}\in H_{0}^{1}(\Omega)$ such that

\[B(u^{*},v)= \int_{\Omega}{fv} \hspace{10mm} \mbox{ for all } v\in H_{0}^{1}. \]

\noindent In other words, $u^{*}$ is the unique weak solution to (**) in $H_{0}^{1}$.  
 
\section{Regularity}

With the same hypotheses as in the previous Section, we wish to lift the regularity of the unique solution to (**) from $H_{0}^{1}$ to $H^{2}$, with the estimate controlled by the $L^{2}$ norm of $g(x)$.  Theorem 6.3.4 (Boundary $H^{2}$-regularity) in \cite{Evans} gives the desired regularity lifting of a solution to (**) when $q\in L^{\infty}$.  However, the $L^{\infty}$ condition is only used in factoring out $||q||_{L^{\infty}}$ from the following integral to find, for $u,v\in H^{1}$ and $\epsilon >0$ in Cauchy's inequality,

\[\int{|quv|}\leq ||q||_{L^{\infty}}\int{|uv|}\leq C\left(\frac{1}{2\epsilon}||u||_{L^{2}}^{2} + \frac{\epsilon}{2}||v||_{L^{2}}^{2}\right). \]

\noindent The  $L^{n}$ hypothesis on $q$ provides,

\[\int{|quv|}\leq \frac{1}{2\epsilon}||cu||_{L^{2}}^{2} + \frac{\epsilon}{2}||v||_{L^{2}}^{2}\leq \frac{1}{2\epsilon}\left(||q||_{L^{n}}^{2}||u||_{L^{\frac{2n}{n-2}}}^{2}\right) + \frac{\epsilon}{2}||v||_{L^{2}}^{2}\]

\[\leq C\left(\frac{\epsilon}{2}||u||_{H^{1}}^{2} + \frac{\epsilon}{2}||v||_{L^{2}}^{2}\right)\leq C\left(\frac{\epsilon}{2}||Du||_{L^{2}}^{2} + \frac{\epsilon}{2}||v||_{L^{2}}^{2}\right) \]

\noindent by H\"older's inequality, the Sobolev imbedding theorem and Poincare's inequality. By the above estimates, we have also

\[\int{(cu)}^{2}\leq M||Du||_{L^{2}}^{2}. \]

\noindent Following the line of reasoning in \cite{Evans}, the result for $q\in L^{n}$ is a sufficient replacement for the estimate for $L^{\infty}$ to get the regularity estimate, 

\[||u||_{H^{2}(\Omega)} \leq C\left(||g||_{L^{2}(\Omega)} + ||u||_{H^{1}(\Omega)}\right) \]

\noindent where $C$ depends only on $\Omega$ and $n$ and $q$.  Now, recalling the second energy estimate above,  

\[||u||_{H^{1}(\Omega)}^{2}(\Omega) \leq CB(u,u) = C\int_{\Omega}{gu} \leq C\left(\frac{1}{2}||g||_{L^{2}(\Omega)}^{2} + \frac{1}{2}||u||_{L^{2}(\Omega)}^{2}\right). \]

\noindent since $u$ is a weak solution to (**).  The last inequality is given by Cauchy's inequality with $\epsilon=1$. Also since $u$ is a unique solution, the $L^{2}$ norm of $u$ is controlled by the $L^{2}$ norm of $g$ by Theorem 6.2.6 in \cite{Evans}.  Therefore,

\[||u||_{H^{2}(\Omega)} \leq C||g||_{L^{2}(\Omega)},\]

\noindent where $C$ depends only on $\Omega$, $n$, and more \textsl{significantly}, $q$.\\

\noindent To summarize the results in Sections 5 and 6, we have:\\

\begin{thm}{\bf 6.1}  \textsl{Let $\Omega$ be a bounded domain in $\R^{n}$ with smooth boundary $\Gamma$ and $n>2$. Then for $g\in L^{2}(\Omega)$, $q\in L^{n}(\Omega)$, and $q>0$, the linear boundary value problem}

\[(**)
\left\{ \begin{array}{rcl}
-\Delta u + q(x)u&=&g(x)   \hspace{5mm} \mbox{ in } \Omega\\
u|_{\Gamma}&=& 0  \hspace{5mm} \mbox{ on } \Gamma,
\end{array}
\right.
\]

\noindent \textsl{has a unique solution $u\in H^{2}(\Omega)\bigcap H_{0}^{1}$ with} 

\[||u||_{H^{2}(\Omega)} \leq C(||g||_{L^{2}(\Omega)}),\]

\noindent \textsl{where $C$ depends only on $\Omega$, $n$, and $q$.}

\end{thm}

\section{Convergence} 

In the previous two Sections, it was shown that (**) is uniquely solvable in $H^{1}$ and the solution is \textsl{a priori} in $H^{2}$ with estimate controlled by the forcing term $g$.  Recalling that (**) represents an arbitrary iteration of Newton's method at time $t_{j}$,  the linear equation solved by the difference, $u_{m+1}-u_{m}$ for $m>1$, is given by

\[-\Delta (u_{m+1}-u_{m})+ (-t_{j}f'(u_{m}))(u_{m+1}-u_{m})\] \[=t_{j}(f(u_{m})-f(u_{m-1})-f'(u_{m-1})(u_{m}-u_{m-1})) \hspace{7mm} \mbox{ in } \Omega\]

$\displaystyle{\hspace{39mm} u_{m+1}-u_{m}=0 \hspace{29mm} 
\mbox{ on } \Gamma.}$

\noindent This is (**) with 

\[v=u_{m+1}-u_{m},\hspace{3mm} q=-t_{j}f'(u_{m}),\]  

\[g=t_{j}(f(u_{m})-f(u_{m-1})-f'(u_{m-1})(u_{m}-u_{m-1})),\] 

\noindent and a zero boundary condition.  Indeed, using the same argument as at the end of Section 3, it is clear that $g\in L^{2}$.  For $m=0$, by the definition of $u_{0}$ at time $t_{j}$, the problem satisfied by $u_{1}-u_{0}$ is

\[-\Delta (u_{1}-u_{0})+ (-t_{j}f'(u_{0}))(u_{1}-u_{0})\] \[=(t_{j}-t_{j-1})f(u_{0}) \hspace{7mm} \mbox{ in } \Omega\]

$\displaystyle{\hspace{39mm} u_{1}-u_{0}=0 \hspace{12mm} 
\mbox{ on } \Gamma,}$\\

\noindent and is again a model for (**).  To facilitate the convergence estimates to follow, it will be helpful to use Taylor's theorem to simplify $g$.  Similar to the application of a mean value theorem used in \cite{Hsiao}, for $m>1$, $g$ can be written as

\[g=t_{j}(u_{m}-u_{m-1})^{2}\int_{(0,1)}{f''(\tau u_{m}+(1-\tau)u_{m-1})(1-\tau)d\tau}. \]

Theorem 6.1 and the boundedness of $f''$ give the estimate,

\[||u_{m+1}-u_{m}||_{H^{2}} \leq C||t_{j}(u_{m}-u_{m-1})^{2}\int_{(0,1)}{f''(\tau u_{m}+(1-\tau)u_{m-1})(1-\tau)d\tau}||_{L^{2}} \]

\[\leq \frac{Ct_{j}M}{2}||(u_{m}-u_{m-1})^{2}||_{L^{2}} \]

\[\leq \frac{Ct_{j}M}{2}||(u_{m}-u_{m-1})||_{L^{4}}^{2}. \]

Before progressing with the estimate, it is important to discuss the dependence on dimension.  For dimensions $n=3$ and $n=4$, the $L^{4}$ norm is controlled by the $H^{1}$ norm, by the Sobolev imbedding theorem, which in turn is controlled by the $H^{2}$ norm. For dimensions $n=5,6,7,$and $8$, the $L^{4}$ norm is controlled by the $H^{2}$ norm, via the more general Sobolev inequality [1,p.270].  The subsequent calculations do not depend on which dimension $n\in (3,4,5,6,7,8)$ is assumed.  However, only in dimension $n=3$ does the general Sobolev theorem assure that our $H^{2}$ solution is indeed continuous.  For $n=5,6,7,$ and $8$, the $H^{2}$ solution is respectively, $L^{10}$, $L^{6}$, $L^{\frac{14}{3}}$, and $L^{4}$.  To continue with the convergence estimate, for $n\in (3,4,5,6,7,8)$, we have

\[\frac{Ct_{j}M}{2}||(u_{m}-u_{m-1})||_{L^{4}}^{2} \leq \frac{Ct_{j}MC_{s}}{2}||(u_{m}-u_{m-1})||_{H^{2}}^{2}\]

\noindent where $C_{s}$ is the constant from the Sobolev theorem and only depends on $\Omega$ and $n$.  Since in Theorem 6.1, $C$ depends on $||f'(u_{m}(x,t_{j}))||_{L^{n}}$ and hence $m$ and $t_{j}$, we invoke the boundedness of $f'$. Therefore $||f'(u_{m}(x,t_{j}))||_{L^{n}}$ is bounded by some constant $C>0$, uniformly over $m$ and $t_{j}$.  Let $K=\frac{CMC_{s}}{2}$.  Inductively,

\[||u_{m+1}-u_{m}||_{H^{2}} \leq (t_{j}K||u_{1}-u_{0}||_{H^{2}})^{2^{m}-1}||u_{1}-u_{0}||_{H^{2}}\]

\noindent and therefore for $s\in \mathbb{N}$,

\[||u_{m+s}-u_{m}||_{H^{2}}\leq [a^{2^{m+s-1}-1}+...+a^{2^{m}-1}]||u_{1}-u_{0}||_{H^{2}} \]

\noindent where $a=t_{j}K||u_{1}-u_{0}||_{H^{2}}$.  If $t_{j}$ is chosen such that $a<1$, then the $\textsl{positive}$ expression in brackets above is bounded from above by the tail end of a convergent geometric series, and therefore goes to zero as $m\rightarrow \infty$.  We have now shown that $u_{m}$ is a Cauchy sequence in the Banach space $H^{2}(\Omega)$, and therefore converges to some $u^{*}\in H^{2}(\Omega)$.  As stated in \cite{Hsiao}, due to the continuity of $f$ and the boundedness of $f'$, it is clear that $u^{*}$ satisfies

\[(*')
\left\{ \begin{array}{rcl}
-\Delta u &=&t_{j}f(u)   \hspace{5mm} \mbox{ in } \Omega\\
 u|_{\Gamma}&=& 0 \hspace{5mm} \mbox{ on } \Gamma
\end{array}
\right.
\]

\noindent almost everywhere and that the uniqueness of the solution $u^{*}$ follows from the uniquenss of the solution $u_{m}(x,t_{j})$ to (**) for each $m$ and $t_{j}$.  One additional assumption is necessary for $t_{j}$ to be chosen as above, as well as for progressing to $t=1$ in finitely many applications of Newton's method.  Assumption (V) will be a condition on the width of the time intervals $t_{j}-t_{j-1}$.  To make this precise we look at the the problem satisfied by $u_{1}-u_{0}$ at time $t_{j}$ and apply Theorem 6.1 and the boundedness of $f$ and $f'$ to estimate,

\[||u_{1}-u_{0}||_{H^{2}}\leq C||(t_{j}-t_{j-1})f(u_{0})||_{L^{2}}\]

\[\leq C(t_{j}-t_{j-1})||f(u_{0})||_{L^{2}} \leq MC(t_{j}-t_{j-1}).\]

\noindent If $A=MC$, then $A$ depends on the bounds on $f$ and $f'$, the volume of $\Omega$, and $n$, but not on $t_{j}$. In the following inequality,

\[Kt_{j}||u_{1}-u_{0}||_{H^{2}}\leq KAt_{j}(t_{j}-t_{j-1}) <1, \]

\noindent the condition for convergence was that the leftmost expression be $<1$.  Since $t_{j}\leq 1$ for all $j$, it suffices to make the assumption (V):

\begin{itemize}

\item V.   For each $j\geq 1$, \hspace{3mm} $t_{j}-t_{j-1} < \frac{1}{KA}$

\end{itemize}

\noindent As $KA$ only depends on $\Omega$, $p=2$, $n$, and $M$, (and in particular, not $j$), $KA$ gives a uniform bound on the time intervals, and therefore $t=1$ is attainable after finitely many applications of Newton's method. When $\Omega$ is a domain in $\R^{3}$, the $H^{2}$ solution is then continuous by the general Sobolev imbedding theorem.  We now list assumptions (I)-(V) and state the main result.

\begin{itemize}

\item I.   $f$  is a continuous map from  $H^{2}(\Omega)$ to $L^{2}(\Omega).$
\item II.  $f'$ and $f''$ are continuous maps from  $H^{1}(\Omega)$ to $L^{n}(\Omega)$
\item III. there exists a constant $M>0$ such that

 \[|f|\leq M, \hspace{3mm} |f'|\leq M,\hspace{3mm}  \mbox{  and  } \hspace{3mm} |f''|\leq M.\]

\item IV.  $(-f')>0$,
\item V.   For each $j\geq 1$, \hspace{3mm} $t_{j}-t_{j-1} < \frac{1}{KA}$

\end{itemize}

\begin{thm}{\bf 7.1} \textsl{With $\Omega$ a bounded domain in $\R^{3}$ with smooth boundary and assumptions (I)-(V),
the semilinear boundary value problem},

\[(*')
\left\{ \begin{array}{rcl}
-\Delta u&=&f(u)   \hspace{5mm} \mbox{ in } \Omega\\
u|_{\Gamma}&=& 0 \hspace{5mm} \mbox{ on } \Gamma=\partial \Omega,
\end{array}
\right.  
\]

\noindent has a unique solution in $H^{2}(\Omega)\bigcap H_{0}^{1}(\Omega)$, and hence a continuous solution, which can be approximated by the Newton-imbedding method.\\
\end{thm}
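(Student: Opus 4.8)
The plan is to assemble Theorem 7.1 from the machinery already in place: the unique solvability and $H^2$-regularity estimate of Theorem 6.1, the Newton-imbedding iteration set up in Section 4, and the contraction-type convergence estimate developed in Section 7. The overall strategy is a two-layer induction — an inner loop over the Newton iterates $m$ at a fixed time $t_j$ establishing convergence to $u(x,t_j)$, and an outer loop over the time steps $t_j$ pushing the parameter from $t=0$ to $t=1$ in finitely many steps. I would first fix $t_j$ and show that the inner sequence $\{u_m(x,t_j)\}$ is Cauchy in $H^2(\Omega)$.

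First I would record the linear problem solved by each Newton difference $u_{m+1}-u_m$, which is exactly an instance of $(**)$ with $q = -t_j f'(u_m)$ and a forcing term $g$ that, via Taylor's theorem, carries a factor of $(u_m - u_{m-1})^2$. Applying Theorem 6.1 together with the uniform bound $|f''|\le M$ from (III) yields
\[
\|u_{m+1}-u_m\|_{H^2} \le \frac{Ct_jM}{2}\,\|u_m-u_{m-1}\|_{L^4}^2,
\]
and then the Sobolev imbedding $\|\cdot\|_{L^4}\le C_s\|\cdot\|_{H^2}$ (valid in dimension $n=3$) converts this into a genuinely quadratic recursion in the $H^2$ norm. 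Here it is essential that the constant $C$ in Theorem 6.1 depends on $q$ only through $\|f'(u_m)\|_{L^n}$, which the bound $|f'|\le M$ renders \emph{uniform} in both $m$ and $t_j$; this is what lets me set $K=\tfrac{CMC_s}{2}$ once and for all. The quadratic recursion iterates to $\|u_{m+1}-u_m\|_{H^2}\le (t_jK\|u_1-u_0\|_{H^2})^{2^m-1}\|u_1-u_0\|_{H^2}$, so as long as the base quantity $a=t_jK\|u_1-u_0\|_{H^2}$ satisfies $a<1$, the telescoping tail is dominated by a convergent geometric series and $\{u_m\}$ is Cauchy, converging to some $u^*\in H^2(\Omega)$. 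Passing to the limit using the continuity of $f$ (I) and boundedness of $f'$ (III) shows $u^*$ solves $-\Delta u = t_j f(u)$ with zero boundary data, and uniqueness descends from the uniqueness in Theorem 6.1 at each stage.

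Next I would verify that the smallness condition $a<1$ can actually be enforced \emph{uniformly}. The initial defect $\|u_1-u_0\|_{H^2}$ is controlled by applying Theorem 6.1 to the problem for $u_1-u_0$, whose forcing is $(t_j-t_{j-1})f(u_0)$; the bound $|f|\le M$ gives $\|u_1-u_0\|_{H^2}\le A(t_j-t_{j-1})$ with $A=MC$ independent of $j$. Hence $a\le KAt_j(t_j-t_{j-1})\le KA(t_j-t_{j-1})$ since $t_j\le 1$, and assumption (V), $t_j-t_{j-1}<\tfrac{1}{KA}$, forces $a<1$ at every time step. Because $KA$ depends only on $\Omega$, $n$, and $M$ — not on $j$ — the admissible step width is bounded below by a fixed positive number, so the partition $0=t_0<t_1<\cdots$ reaches $t=1$ after finitely many steps; the solution $u(x,t_{j-1})$ of the previous stage serves as the initial guess $u_0$ for the next, and continuity of $f$ guarantees this initialization keeps the estimates valid. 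At $t=1$ we obtain the unique $H^2(\Omega)\cap H^1_0(\Omega)$ solution of $(*')$.

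\emph{The main obstacle} is the dimensional restriction to $\R^3$, and it must be handled with care because the paper's constants were built for general $n>2$. The convergence recursion itself goes through for $n\in\{3,4,5,6,7,8\}$, since what is needed is only that $\|\cdot\|_{L^4}$ be controlled by $\|\cdot\|_{H^2}$ via Sobolev imbedding, which holds up to $n=8$. The genuine reason for restricting to $n=3$ is not convergence but the final conclusion: only when $n=3$ does the general Sobolev imbedding theorem place $H^2(\Omega)\hookrightarrow C^0(\bar\Omega)$, delivering the \emph{continuous} solution claimed in the statement (for $n=5,6,7,8$ one obtains merely $L^{10},L^6,L^{14/3},L^4$ integrability). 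Thus the last step of the proof is simply to invoke $H^2(\Omega)\hookrightarrow C(\bar\Omega)$ for $n=3$ to upgrade the limit $u^*$ from an $H^2$ solution to a continuous one, completing the theorem.
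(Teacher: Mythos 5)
Your proposal is correct and follows essentially the same route as the paper's Section 7: the same Taylor-theorem rewriting of the forcing term, the same application of Theorem 6.1 with the uniform bounds from (III) to obtain the quadratic recursion $\|u_{m+1}-u_m\|_{H^2}\leq t_jK\|u_m-u_{m-1}\|_{H^2}^{2}$, the same geometric-series Cauchy argument, the same control of $\|u_1-u_0\|_{H^2}$ by $A(t_j-t_{j-1})$ leading to assumption (V), and the same final invocation of $H^2(\Omega)\hookrightarrow C(\bar{\Omega})$ in dimension $3$. Your remark that the restriction to $\R^{3}$ is needed only for continuity of the limit, not for convergence, matches the paper's own discussion of dimensions $n=3,\dots,8$.
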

 
\section{Conclusion}
 The goal for improving this procedure is to weaken the assumptions on $f$ and $f'$.  In particular, to eliminate the boundedness or equivalently the uniform boundedness of $f(u)$ and $f'(u)$.  To do this requires a function $f$ such that $f(u_{m}(x,t))$ does not grow too fast in $L^{2}$ norm as $t$ increases and such that $f'(u_{m}(x,t))$ does not grow too fast in $L^{n}$ norm as $m$ and $t$ increase.  If the boundedness of $f$ is dropped from the assumptions, a linear function would be allowed, but assumption (IV) would force it to be decreasing.  Since the spectrum of $-\Delta$ is positive, (*') is then solved uniquely with $u\equiv 0$ (which is achieved vacuously in the procedure).  An example of a function satisfying (I)-(IV) is 
 
\[f(x)=cot^{-1}(x)\]

\noindent whose derivatives are 

\[f'(x)=\frac{-1}{1+x^{2}} \hspace{3mm} \mbox{  and  } \hspace{3mm} f''(x)=\frac{2x}{(1+x^{2})^{2}}. \]  

\noindent Similarly, if $\epsilon>0$, $A>0$, and $h,k\in \R$, then

\[Acot^{-1}\left(\frac{x-h}{\epsilon}\right)+k \]

\noindent represents a family of functions, each of which satify (I)-(IV).  A subset of this family, given by

\[f_{\epsilon}(x)=\frac{1}{\pi}cot^{-1}(\frac{x}{\epsilon}) -1,\] 

\noindent is of interest since 

\[f_{\epsilon}(x)\rightarrow -H \hspace{5mm} \mbox{ as } \hspace{5mm} \epsilon\rightarrow 0\]

\[f_{\epsilon}'(x)=\frac{-\epsilon}{\epsilon^{2}+x^{2}}\rightarrow -\delta \hspace{5mm} \mbox{ as } \hspace{5mm} \epsilon\rightarrow 0,\]

\noindent where $H$ is the Heaviside function and $\delta$ is the Dirac delta function and the arrows imply at least pointwise convergence and possibly a more refined limit.  It is natural to ask whether the Newton-imbedding procedure can be carried out in a distributional setting with $f=-H$ and whether  $f_{\epsilon}$ produces a meaningul approximation to the Heaviside function for small $\epsilon$.  More generally, if $\mathcal{P}$ is the class of functions which satisfy (I)-(IV), it is of interest as to which functions exist in a suitable closure of $\mathcal{P}$.
In this case, `suitable closure' can be taken to mean one whose functions allow for the application of the Newton-imbedding procedure in possibly a distributional or more general setting, and produce a solution which can be approximated by applying the procedure to a function in $\mathcal{P}$.  

\section*{Acknowledgement}

First, I would like to thank Professor Congming Li at the University of Colorado, Boulder for discussion and encouragement.  In addition I am grateful for the feedback from Professor James P. Kelliher at the University of California, Riverside and John Huerta at the University of California, Riverside.  Finally I would like to thank my advisor, Professor Michel L. Lapidus, for continual support in all of my mathematical endeavors.


\begin{thebibliography}{999}
\bibitem{Evans}
Evans, Lawrence C., $\textsl{Partial Differential Equations}$, Graduate Studies in Mathematics \textbf{19}, Amer. Math. Soc., Providence, RI, (1998).
 
\bibitem{Hsiao}
Hsiao, George C., A Newton-imbedding procedure for solutions of semilinear boundary value problems in Sobolev spaces, Complex Variables and Elliptic Equations, Nos.8-11, \textbf{51} 1021-1032, (2006). 

\bibitem{Trud}
Neil S. Trudinger, Linear elliptic operators with measurable coefficients, \textsl{Ann. Scuola. Norm. Sup. Pisa, Sci. Fis. Mat.}, \textbf{27}, No.3, 265-308, (1973).

\end{thebibliography}
\end{document}